\newtheorem{theorem}{Theorem}[section]
\newtheorem{proposition}{Proposition}[section]
\newtheorem{lemma}{Lemma}[section]
\newtheorem{corollary}{Corollary}[section]
\numberwithin{equation}{section}
\title[Fractional anisotropic Calder\'on problem]{Fractional anisotropic Calder\'on problem on complete Riemannian manifolds}
\author[Mourad Choulli]{Mourad Choulli}
\address{Universit\'e de Lorraine, France}
\email{mourad.choulli@univ-lorraine.fr}
\author[El Maati Ouhabaz]{El Maati Ouhabaz}
\address{University of Bordeaux, Institut de Math\'ematiques de Bordeaux, France}
\email{Elmaati.Ouhabaz@math.u-bordeaux.fr}
\date{}
\begin{document}

\begin{abstract}
We prove that the metric tensor $g$ of a complete Riemannian manifold is uniquely determined, up to isometry, from the knowledge of a local source-to-solution operator associated with a fractional power of  the Laplace-Beltrami operator $\Delta_g$. Our result holds under the condition that the metric tensor $g$ is known in an arbitrary small subdomain. We also consider the case of closed manifolds and provide an improvement of the main result in \cite{FGKU}.
 \end{abstract}

\subjclass[2010]{35R30, 35R11, 35R01}

\keywords{Fractional Laplace-Beltrami operator, Fractional anisotropic Calder\'on problem, local source-to-solution operator.}

\maketitle


\section{Introduction}

Let   $(M,g)$ be a complete Riemannian manifold  without boundary, it is connected and has dimension $n\ge 2$. 
We denote by $\Delta_g$ the  associated  Laplace-Beltrami operator and by  $d\mu_g=\sqrt{\mathrm{det}(g)}dx_1\ldots dx_n$  the Riemannian measure.
We use the convention that $\Delta_g$ is nonnegative. It is the selfadjoint operator associated with the quadratic form
\[
 {\mathfrak a}(u,u) = \int_M | \nabla u |^2 \, d\mu_g, \quad u \in H^1(M).
\]
We use the classical notation $D(\Delta_g)$ for its domain and $R(\Delta_g)$ for its range.  Let $e^{-t\Delta_g}$ denote the corresponding semigroup on $L^2(M) = L^2(M, d\mu_g)$.  For a given $\alpha \in (0, 1)$, we recall 
\[
\Delta_g^{-\alpha}f =\frac{1}{\Gamma(\alpha)}\int_0^\infty t^{\alpha-1}e^{-t\Delta_g}fdt,\quad f\in R(\Delta_g).
\]
One then defines 
\[
\Delta_g^\alpha f = \Delta_g \Delta_g^{-(1-\alpha)}f = \frac{1}{\Gamma(1-\alpha)}\int_0^\infty t^{-\alpha} e^{-t\Delta_g} \Delta_g fdt,\quad f \in D(\Delta_g).
\]

Let  $O$ be an open nonempty subset of $M$ and define
\[
\tilde{\mathcal D}_0^g (O) = R\left({\Delta_g}_{\vert C_0^\infty (O)}\right) = \{ \Delta_g f, \ f \in C_0^\infty (O) \}.
\]
 Our main purpose is to establish the following result.
 
 \begin{theorem}\label{theorem0}
Let $(M_j,g_j)$, $j=1,2$, be two complete Riemannian manifolds and $0<\alpha<1$. Assume that there exists a nonempty open subset $O_j$ of $M_j$, $j=1,2$, such that $(O_1,g_1)=(O_2,g_2):= (O,g)$ and
\begin{equation}\label{th1}
\chi_O\Delta_{g_1}^{-\alpha}f=\chi_O\Delta_{g_2}^{-\alpha}f,\quad f\in \tilde{\mathcal D}_0^{g_1}(O_1)=\tilde{\mathcal D}_0^{g_2}(O_2):=\tilde{\mathcal D}_0.
\end{equation}
Then there exists a diffeomorphism $F: M_1\rightarrow M_2$ such that $F^\ast g_2=g_1$.
\end{theorem}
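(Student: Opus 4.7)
My plan is to reduce the given fractional source-to-solution identity first to an equality of the two heat semigroups acting on $\tilde{\mathcal D}_0$, then to equality of the two heat kernels on $O\times O$, and finally to invoke a heat-kernel based geometric reconstruction. Writing $\Delta_{g_j}^{-\alpha}$ through its integral representation, the hypothesis \eqref{th1} becomes
\[
\int_0^\infty t^{\alpha-1}\,\chi_O\bigl(e^{-t\Delta_{g_1}}-e^{-t\Delta_{g_2}}\bigr)f\,dt=0,\qquad f\in\tilde{\mathcal D}_0.
\]
The map $t\mapsto \chi_O(e^{-t\Delta_{g_1}}-e^{-t\Delta_{g_2}})f$, valued in $L^2(O)$, extends holomorphically to $\{\Re z>0\}$ with decay bounds coming from self-adjointness. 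Combining this analyticity with the vanishing of its Mellin transform at $s=\alpha$---either by iterating to additional Mellin values $s=k\alpha$ through $\Delta_g^{-k\alpha}=(\Delta_g^{-\alpha})^k$, or by an analytic continuation / Caffarelli--Silvestre extension argument---I aim to deduce the pointwise identity
\[
\chi_O e^{-t\Delta_{g_1}}f=\chi_O e^{-t\Delta_{g_2}}f,\qquad t>0,\ f\in\tilde{\mathcal D}_0.
\]

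Next I would upgrade this semigroup identity to equality of heat kernels on $O\times O$. For $\varphi\in C_c^\infty(O)$, the element $f=\Delta_g\varphi$ belongs to $\tilde{\mathcal D}_0$ and, since $g_1=g_2=g$ on $O$, equals $\Delta_{g_j}\varphi$ for both $j$. Using $e^{-t\Delta_{g_j}}\Delta_{g_j}\varphi=-\tfrac{d}{dt}e^{-t\Delta_{g_j}}\varphi$ one obtains $\tfrac{d}{dt}\chi_O(e^{-t\Delta_{g_1}}-e^{-t\Delta_{g_2}})\varphi=0$; combined with strong continuity of the semigroups at $t=0$, this forces $\chi_O e^{-t\Delta_{g_1}}\varphi=\chi_O e^{-t\Delta_{g_2}}\varphi$ for every $\varphi\in C_c^\infty(O)$ and $t>0$. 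Equivalently, the heat kernels coincide: $K_{g_1}(t,x,y)=K_{g_2}(t,x,y)$ on $(0,\infty)\times O\times O$.

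The last and most delicate step is the global geometric reconstruction from local heat-kernel data, together with the a priori knowledge of $g$ on $O$. Following the strategy of \cite{FGKU} adapted to the complete noncompact setting, the restriction of the heat kernel to $O\times O$ encodes the spectral data of $\Delta_{g_j}$ tested against functions supported in $O$; via its short- and long-time behavior, combined with a Tataru/Kurylev-type unique continuation for the associated wave equation, one reconstructs the manifolds up to an isometry equal to the identity on $O$. \emph{This third step is the main obstacle}: propagating purely local heat-kernel information on $O\times O$ to a global isometry between noncompact complete manifolds forces one to handle both the continuous spectrum and the geometry at infinity, which is precisely where the improvement over the closed-manifold case of \cite{FGKU} becomes nontrivial.
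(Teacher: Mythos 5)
Your skeleton (fractional power $\to$ heat semigroup $\to$ heat kernel on $O\times O$ $\to$ geometry) is the paper's, and your intermediate step of differentiating in $t$ and integrating back to pass from $\tilde{\mathcal D}_0$ to all of $C_0^\infty(O)$ is exactly what the paper does. But the two ends of your argument have genuine gaps. The first is the passage from the vanishing of a \emph{single} Mellin value at $s=\alpha$ to the vanishing of $\chi_O(e^{-t\Delta_{g_1}}-e^{-t\Delta_{g_2}})f$. Your proposed iteration via $\Delta_g^{-k\alpha}=(\Delta_g^{-\alpha})^k$ cannot be run: the hypothesis \eqref{th1} only gives equality \emph{after} multiplication by $\chi_O$, and $\Delta_{g_j}^{-\alpha}f$ is neither supported in $O$ nor an element of $\tilde{\mathcal D}_0$, so the identity cannot be composed with itself. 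The mechanism that actually works is that $\tilde{\mathcal D}_0$ is invariant under $\Delta_{g_1}=\Delta_{g_2}$ on $C_0^\infty(O)$, so one may apply \eqref{th1} to $\Delta_g^k f$ for every $k\ge 1$; $k$ integrations by parts turn this into the vanishing of $\int_0^\infty t^{\alpha-1-k}\chi_{\Omega_0}(e^{-t\Delta_{g_1}}-e^{-t\Delta_{g_2}})f\,dt$ for all $k\ge 1$, i.e.\ (after $s=1/t$) the vanishing of \emph{all} moments of $T(s)=s^{-\alpha}\chi_{\Omega_0}(e^{-(1/s)\Delta_{g_1}}-e^{-(1/s)\Delta_{g_2}})f$. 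Even then the moment problem is determinate only because of quantitative decay of $T$ at infinity, equivalently of $\chi_{\Omega_0}e^{-t\Delta_{g_j}}f$ as $t\to 0$; this is supplied by the Davies--Gaffney estimate $\|\chi_{\Omega_0}e^{-t\Delta_{g_j}}f\|\le e^{-d^2/(4t)}\|f\|$ for $f$ supported in $\Omega_1$ with $d=\mathrm{dist}(\overline{\Omega}_0,\overline{\Omega}_1)>0$, which lets the Laplace transform of $T$ continue analytically to a half-plane $\{\Re\lambda>-\mu\}$ containing $0$ and hence vanish identically. ``Decay bounds coming from self-adjointness'' (mere contractivity) do not suffice. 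This $L^2$ off-diagonal bound, valid on every complete manifold unlike pointwise Gaussian bounds, is precisely the paper's replacement for the Gaussian estimates of \cite{FGKU}; it forces one to first work with disjoint $\Omega_0,\Omega_1\Subset O$, obtain kernel equality off the diagonal, and only then extend to $O\times O$ by continuity.

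The second gap is the final geometric step, which you explicitly leave open. The paper does not reprove a reconstruction theorem: it converts the heat identity $\chi_O e^{-t\Delta_{g_1}}f=\chi_O e^{-t\Delta_{g_2}}f$, $f\in C_0^\infty(O)$, into the wave identity $\chi_O W_{\Delta_{g_1}}(f)=\chi_O W_{\Delta_{g_2}}(f)$ for sources $f\in C_0^\infty((0,\infty)\times O)$ by means of the transmutation (subordination) formula
\[
 e^{-t\Delta_g}f=\frac{t^{-3/2}}{4\sqrt{\pi}}\int_0^\infty e^{-\tau/(4t)}\frac{\sin\bigl(\sqrt{\tau}\,\Delta_g^{1/2}\bigr)}{\Delta_g^{1/2}}f\,d\tau
\]
together with uniqueness of the Laplace transform, and then invokes the local source-to-solution uniqueness result for the wave equation on complete manifolds of \cite[Theorem 2]{HLOS}. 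Identifying this reduction (or an equivalent one) is needed to close your outline; as written, the ``most delicate step'' is asserted rather than carried out.
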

In Theorem \ref{theorem0},  $\chi_O$ denotes the characteristic function of $O$.

Let us see that the result of Theorem \ref{theorem0} can be interpreted in terms of a source-to-solution operator. Assume first that $M_j$ is not compact. In that case, the Laplacian is injective.  Approximating $\Delta_{g_j}$ by $\epsilon +\Delta_{g_j}$, $\epsilon >0$, we easily see that for each $f\in \tilde{\mathcal D}_0^{g_j}(O_j)$ there exists  $u_j=u_j(f)\in D(\Delta_{g_j})$ satisfying $\Delta_{g_j}^\alpha u_j=f$. This $u_j$ is unique since $\Delta_{g_j}$ is injective.  Furthermore, $u_j=\Delta_{g_j}^{-\alpha}f$. As a consequence the source-to-solution operator 
\[
\mathcal{S}_j:f\in \tilde{\mathcal D}_0^{g_j}(O_j)\mapsto u_j(f)_{|O_j}\in D(\Delta_{g_j})
\]
is well defined. The assumption \eqref{th1} is then equivalent to 
$\mathcal{S}_1=\mathcal{S}_2$.

 If $M_j$ is compact then we consider $\Delta_{g_j}$ on the orthogonal space to the constant function $1$. We then proceed as above. Let us also note for clarity  that in this case, $1 \notin R(\Delta_{g_j})$. Indeed if $u_j \in D(\Delta_{g_j})$ is such that 
 $\Delta_{g_j} u_j = 1$,  then for all $t > 0$, $ \Delta_{g_j}e^{-t\Delta_{g_j}} u_j = 1$. Taking the $L^2$ norm and using the analyticity of the semigroup, it follows that $\| 1 \|_2 \le Ct^{-1} \| u_j \|_2$ for all $ t >0$, where $C>0$ is a constant.  One has a contradiction by letting $t \to  \infty$.

Theorem \ref{theorem0} was recently  established in \cite{FGKU} in the case of  closed Riemannian manifolds, that is compact manifolds without boundaries. One of the key tools in the proof of the  main result in \cite{FGKU} is a pointwise Gaussian upper bound for the heat kernel of the Laplace-Beltrami operator. The  Gaussian bounds are however  not  valid for an arbitrary complete non compact Riemannian  manifold. Instead, we use the so-called Davies-Gaffney estimate (an $L^2$-off diagonal bound) which, in contrast to the Gaussian upper bound, is valid for any complete Riemannian manifold.
We mention that the result of \cite{FGKU} generalizes an earlier one in \cite{Fe} in which the author assumes, in addition of the assumptions in \cite{FGKU}, that $(O_j,g_j{_{|O_j}})$ belongs locally to a Gevrey class, $j=1,2$. 

Obviously, $\tilde{\mathcal D}_0 \subset C_0^\infty (O)$. Note that we have formulated the equality \eqref{th1} for $f\in \tilde{\mathcal D}_0$.
In the setting of compact manifolds as in Theorem \ref{theorem0.0} below, the Laplacians are invertible operators on the orthogonal spaces to the constants. In particular, $\Delta_{g_1}^{-\alpha}$ and $\Delta_{g_2}^{-\alpha}$ are bounded operators on those spaces and we can then write the equality \eqref{th1} for  all $f \in C_0^\infty(O)$ which is orthogonal to $1$.  

Beside Theorem \ref{theorem0} we establish another  improvement of the main result in \cite{FGKU}. Prior to stating this improvement we need to introduce some notations.

For $j=1,2$, let $(M_j,g_j)$ be a closed manifold and denote by $(\lambda_k^j,\phi_k^j)_{k \ge 0}$ the sequence of eigenvalues and the corresponding eigenfunctions so that $(\phi_k^j)_{k \ge 0}$ is an orthonormal basis of $L^2(M_j)$. Suppose now  that  there exists a nonempty open subset $O_j$ of $M_j$, $j=1,2$, such that $(O_1,g_1)=(O_2,g_2):=(O,g)$. We define the set
\[
N_\ell =\left\{f\in C_0^\infty (O), \  \int_O \phi_k^j\overline{f}d\mu_g=0,\; 0\le k\le \ell,\; j=1,2\right\},\quad \ell \ge 0.
\]

 \begin{theorem}\label{theorem0.0}
Let $(M_j,g_j)$, $j=1,2$, be two closed Riemannian manifolds, $0<\alpha<1$ and $\ell \ge 0$. Assume that there exists a nonempty open subset $O_j$ of $M_j$, $j=1,2$, such that $(O_1,g_1)=(O_2,g_2):=(O,g)$ and
\begin{equation}\label{th1-1}
\chi_O\Delta_{g_1}^{-\alpha}f=\chi_O \Delta_{g_2}^{-\alpha}f,\quad f\in N_\ell .
\end{equation}
Then there exists a diffeomorphism $F:M_1\rightarrow M_2$ such that $F^\ast g_2=g_1$.
\end{theorem}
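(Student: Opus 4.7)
The plan is to reduce Theorem \ref{theorem0.0} to the main result of \cite{FGKU} by showing that the hypothesis on the finite-codimension subspace $N_\ell\subset C_0^\infty(O)\cap 1^\perp$ already implies the full source-to-solution identity $\chi_O\Delta_{g_1}^{-\alpha}f=\chi_O\Delta_{g_2}^{-\alpha}f$ for every $f\in C_0^\infty(O)$ orthogonal to the constants, modulo a finite-rank correction that can be eliminated by an asymptotic analysis of the heat kernel. The key preliminary observation is that $N_\ell$ has codimension at most $2\ell$ in $C_0^\infty(O)\cap 1^\perp$, so the operator $\chi_O(\Delta_{g_1}^{-\alpha}-\Delta_{g_2}^{-\alpha})$, vanishing on $N_\ell$, carries only finitely many degrees of freedom.

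Concretely, I would first use the spectral decomposition $\Delta_{g_j}^{-\alpha}f=\sum_{k\ge 1}(\lambda_k^j)^{-\alpha}\langle f,\phi_k^j\rangle\phi_k^j$ to reformulate \eqref{th1-1} as a constraint on the Schwartz kernels $G_{\alpha,j}(x,y)=\sum_{k\ge 1}(\lambda_k^j)^{-\alpha}\phi_k^j(x)\phi_k^j(y)$. The hypothesis asserts that for each $x\in O$ the function $y\mapsto G_{\alpha,1}(x,y)-G_{\alpha,2}(x,y)$ lies in the $L^2(O)$-annihilator of $N_\ell$; since this annihilator is the finite-dimensional span of $\{\phi_k^j|_O:0\le k\le\ell,\,j=1,2\}$, and since the two kernels are symmetric in $(x,y)$, one obtains on $O\times O$
\[
G_{\alpha,1}(x,y)-G_{\alpha,2}(x,y)=\sum_{k,m=0}^{\ell}\sum_{j_1,j_2=1}^{2}c_{k,m}^{j_1,j_2}\,\phi_k^{j_1}(x)\phi_m^{j_2}(y)
\]
for suitable constants $c_{k,m}^{j_1,j_2}$.

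Next, I would combine this with the Mellin representation $\Delta_{g_j}^{-\alpha}=\Gamma(\alpha)^{-1}\int_0^\infty t^{\alpha-1}e^{-t\Delta_{g_j}}\,dt$ and the Davies--Gaffney estimate (invoked in the proof of Theorem \ref{theorem0}) to derive the analogous finite-rank identity for the heat kernels on $O\times O$,
\[
K_t^{g_1}(x,y)-K_t^{g_2}(x,y)=\sum_{k,m=0}^{\ell}\sum_{j_1,j_2=1}^{2}\gamma_{k,m}^{j_1,j_2}(t)\,\phi_k^{j_1}(x)\phi_m^{j_2}(y),\quad t>0,
\]
with coefficients $\gamma_{k,m}^{j_1,j_2}(t)$ built from the low-frequency eigenvalues $\lambda_k^j$, $k\le\ell$. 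Since $(O,g_1)=(O,g_2)$, the short-time singular expansions of $K_t^{g_1}$ and $K_t^{g_2}$ on $O\times O$ coincide off the diagonal, so the difference is controlled both as $t\to 0^+$ and as $t\to\infty$. Using the linear independence on $O$ of the low-frequency eigenfunctions $\phi_k^j$ (guaranteed by unique continuation for the elliptic eigenvalue equations with respect to the common metric $g$) together with the separation of exponential decay rates $e^{-t\lambda_k^j}$ in the large-$t$ asymptotics, one forces every $\gamma_{k,m}^{j_1,j_2}$ to vanish identically. Hence $K_t^{g_1}=K_t^{g_2}$ on $O\times O$ for every $t>0$, which is equivalent to \eqref{th1-1} holding on all of $C_0^\infty(O)\cap 1^\perp$, and the main result of \cite{FGKU} then produces the required isometry $F:M_1\to M_2$.

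The principal obstacle lies in this last elimination step, which requires care in the degenerate cases where low eigenvalues coincide across $j=1,2$ or collide with higher eigenvalues. In the generic situation the separation of exponential rates as $t\to\infty$ is immediate, whereas in general one proceeds by induction on $\ell$, peeling off one low-frequency mode at a time and iteratively applying the previously settled case.
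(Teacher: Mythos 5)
Your reduction has a genuine gap at its central step. The first observation (that $\chi_O(\Delta_{g_1}^{-\alpha}-\Delta_{g_2}^{-\alpha})$ vanishes on $N_\ell$ and therefore has a finite-rank, symmetric kernel of the form $\sum c_{k,m}^{j_1,j_2}\phi_k^{j_1}(x)\phi_m^{j_2}(y)$ on $O\times O$) is fine as linear algebra. But the passage from this to the ``analogous finite-rank identity for the heat kernels'' is asserted, not proved, and it is precisely the technical heart of the whole problem. The Mellin formula expresses $\Delta_g^{-\alpha}$ in terms of $e^{-t\Delta_g}$, i.e.\ it goes in the wrong direction; recovering the heat semigroup from $\Delta^{-\alpha}$ requires the moment/Laplace-transform machinery of Lemma \ref{lemma1} and Proposition \ref{proposition1}, which as written needs the difference $B_1A_1^{-\alpha}A_1^kf-B_2A_2^{-\alpha}A_1^kf$ to vanish for \emph{all} $k\ge 1$ so that all derivatives of the Laplace transform $L$ vanish at the origin. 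In your route these moments do not vanish: they pick up the contributions $(\lambda_m^j)^k\int_O\phi_m^j\overline f$ of the finite-rank correction, so one must identify $L$ with an explicit (Mittag--Leffler-type) function, invert the Laplace transform, and only then read off the $t$-dependence of the coefficients $\gamma_{k,m}^{j_1,j_2}(t)$. None of this is carried out, and without the explicit form of the $\gamma$'s the final elimination cannot even be set up. Moreover, the elimination as you describe it fails for the zero mode: $\phi_0^1$ and $\phi_0^2$ are both constants (hence never linearly independent on $O$), $\lambda_0^j=0$ gives no exponential separation as $t\to\infty$, and the large-time limits of the two heat kernels are $1/\mathrm{vol}(M_j)$, which are not known a priori to coincide. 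This degeneracy is present for every $\ell\ge 0$, not only in exceptional cases.

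The paper avoids all of this by exploiting a fact your proposal does not use: $N_\ell$ is \emph{invariant} under both Laplacians, since $\int_O\phi_k^j\,\overline{\Delta_{g_j}f}\,d\mu_g=\lambda_k^j\int_O\phi_k^j\overline f\,d\mu_g$ for $f\in C_0^\infty(O)$. One can therefore take $D_0=N_\ell$ in the abstract scheme and run Lemma \ref{lemma1}, Proposition \ref{proposition1} and Corollary \ref{cor1} verbatim, with no correction term, to get $\chi_Oe^{-t\Delta_{g_1}}h=\chi_Oe^{-t\Delta_{g_2}}h$ for $h\in N_\ell$. The extension from $N_\ell$ to all of $C_0^\infty(O)$ is then performed at the heat-semigroup level, where it is elementary: the operator $T_\ell=\prod_{j,k}(\Delta_{g_j}-\lambda_k^j)$ maps $C_0^\infty(O)$ into $N_\ell$, and one strips off the factors $(\Delta_{g_j}-\lambda_k^j)$ one at a time by writing $e^{t\lambda}\,\chi_Oe^{-t\Delta}( \Delta-\lambda)v=-\frac{d}{dt}\,\chi_Oe^{-t(\Delta-\lambda)}v$ and integrating in $t$ (the constants of integration match because $B_1f=B_2f$ on $C_0^\infty(O)$). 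If you want to salvage your plan, the most direct fix is to reverse the order of your two reductions in exactly this way: pass to the heat semigroup first, on $N_\ell$, and only then remove the finitely many spectral constraints.
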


It is well known that two closed isometric manifolds are isospectral, that is they have the same spectrum. Therefore,  it follows from  Theorem \ref{theorem0.0} that the knowledge of $\Delta_{g_j}^{-\alpha}$ on a possibly small common open subset  $O$ and outside a finite number of eigenspaces allows to recover equality of all eigenvalues. This fact was not contained  in the main result of \cite{FGKU} which considers the case $\ell = 0$, only (see also \cite[Proposition 2.1]{Fe} where isospectrality is proved under a stronger condition than in \cite{FGKU}). 

Other types of inverse problems for fractional operators have  attracted attention in recent years. 
 Precisely, these inverse problems consist in the determination of the potential or the isotropic conductivity  in fractional Schr\"odinger operators from interior measurements. Without being exhaustive we mention the following references \cite{BGU,Chien, CLR,CGR,GRSU,GSU,QU,Ru,RSa1,RSa2,RSi}.
In order to give an idea of what kind of inverse problems were treated in some of these  references, we discuss briefly the problem of determining the potential in a fractional Sch\"odinger equation from an analogue of the Dirichlet-to-Neumann map. Denote by $(-\Delta)^\alpha=\mathrm{Op}(|\xi|^{2\alpha})$ the fractional Laplacian of $\mathbb{R}^n$ of order $\alpha$. Pick $\Omega$ a bounded domain of $\mathbb{R}^n$. Let $q\in L^\infty(\Omega)$ and consider the  fractional Sch\"odinger equation
\begin{equation}\label{S1}
(-\Delta)^\alpha u+qu=0\; \mathrm{in}\; \Omega, \quad u_{|\Omega_e}=f,
\end{equation}
where $\Omega_e=\mathbb{R}^n \setminus \overline{\Omega}$.

It is shown in \cite{GSU} that the following analogue of Dirichlet-to-Neumann map
\[
\Sigma_q:f\in H^\alpha (\Omega_e)\mapsto H^\alpha(\Omega_e)^\ast: f\mapsto (-\Delta)^\alpha u_{|\Omega_e}
\]
is well defined provided that $0$ is not an eigenvalue for exterior Dirichlet problem for $(-\Delta)^\alpha +q$. 

Let $W_j$ be an open subset of $\Omega_e$, $j=1,2$. The main result in \cite{GSU} shows that $f\in C_0^\infty (W_1)\mapsto \Sigma_q(f)_{|W_2}$ determines uniquely $q$. 

In contrast to the classical Calder\'on problem, based on geometric optic solutions, the uniqueness theorem in \cite{GSU} relies essentially on the unique continuation property of the non local operator $(-\Delta)^\alpha$.

We also mention the references \cite{CRZ,RZ1,RZ2} where the recovery of variable kernels of elliptic fractional order operators in noncompact Euclidean setting is discussed.

We finally point out that inverse spectral problems associated with fractional elliptic operators are challenging problems.
A  uniqueness result for the problem consisting in determining the potential in a fractional Schr\"odinger operator from an interior spectral data was established in \cite{Ch}.

\section{Proof of the main result}

\subsection{Relationship to the heat equation: an abstract setting}

Let $\mathcal{E}_j$, $j=1,2$, be a (complex) Banach space with norm $\|\cdot \|_j$ and $-A_j$  the infinitesimal generator of a bounded analytic semigroup $e^{-tA_j}$ (see Subsection \ref{sub1} of Appendix \ref{appendixA}  for the definition). Set 
\begin{equation}\label{e1}
\|e^{-tA_j}\|_{\mathscr{B}(\mathcal{E}_j)}\le \Lambda_j ,\quad t\ge 0,
\end{equation}
for some constant  $\Lambda_j \ge 1$. 

Pick $0<\alpha <1$.  As usual, define 
\begin{equation}\label{e0}
A_j^{-\alpha}f =\frac{1}{\Gamma(\alpha)}\int_0^\infty t^{\alpha-1}e^{-tA_j}fdt,\quad f\in R(A_j).
\end{equation}

Assume that there exist $D_0$ and $D$, two nontrivial subspaces of $\mathcal{E}_1\cap \mathcal{E}_2$, so that 
\[
\|f\|_1=\|f\|_2:=\|f\|,\quad f\in D_0\cup D. 
\]
Further, suppose that the following assumptions hold

\noindent $\bf{(h1)}$ $\displaystyle D_0\subset \bigcap_{k\ge 1}D(A_j^k)$, $j=1,2$.
\\
$\bf{(h2)}$ $D_0$ is invariant  under $A_j^k$ for every $k\ge 1$,  that is,  $A_j^kD_0\subset D_0$, $j=1,2$.

In the case where $A_1{_{|D_0}}=A_2{_{|D_0}}$ we set 
\[
\tilde{\mathcal D}_0 =R(A_1{_{|D_0}})=R(A_2{_{|D_0}})  \subset D_0.
\]

Let $B_j\in \mathscr{B}(\mathcal{E}_j,D)$ and introduce the following definition that we call weak Davies-Gaffney estimate.

\noindent $\bf (WDG)$  For each $f\in D_0$ there exists two constants $C_j=C_j(A_j,f,B_j)>0$ and $\mu_j=\mu_j (A_j,f,B_j)>0$ such that
\[
\|B_je^{-tA_j}f\|\le C_je^{-\mu_j/t },\quad \; 0<t\le 1.
\] 

The following lemma consists in a generalization, in an abstract setting, of results already appearing in \cite[Section 2]{Fe} and \cite[Subsection 2.1]{FGKU}.

\begin{lemma}\label{lemma1}
Assume that
\begin{equation}\label{e2.0}
A_1f=A_2f \quad {\rm and}  \quad B_1 f = B_2 f, \quad f\in D_0.
\end{equation}
Then for all $f\in D_0$, $k\ge 1$ and $\ell \ge 0$, we have
\begin{equation}\label{e3}
t^{-k+\alpha-1}\frac{d^\ell}{dt^\ell}[B_1e^{-tA_1}-B_2e^{-tA_2}]f\in L^1((0,\infty),D) 
\end{equation}
and 
\begin{align}
&B_1A_1^{-\alpha}A_1^kf-B_2A_2^{-\alpha}A_1^kf \label{e4}
\\
&\hskip 1.5cm =\frac{1}{\Gamma(\alpha)}\prod_{j=0}^{k-1}(\alpha-1-j)\int_0^\infty t^{-k+\alpha-1}[B_1e^{-tA_1}-B_2e^{-tA_2}]f.\nonumber
\end{align}
\end{lemma}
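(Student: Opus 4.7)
The plan is to first upgrade the equality $A_1=A_2$ on $D_0$ to all powers of these operators, then promote the weak Davies--Gaffney estimate to all derivatives of $t\mapsto B_j e^{-tA_j}f$, and finally obtain the identity (1.4) by a $k$-fold integration by parts against the integral that defines $A_j^{-\alpha}$.

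First I would observe, by induction on $k$ using (h1), (h2) and $A_1 f = A_2 f$ on $D_0$, that $A_1^k f = A_2^k f$ for every $f\in D_0$ and every $k\ge 1$: the inductive step is $A_1^{k+1}f = A_1(A_1^k f)= A_2(A_1^k f)= A_2(A_2^k f)= A_2^{k+1}f$, where the second equality uses that $A_1^k f\in D_0$ by (h2). This lets me rewrite the left-hand side of (1.4) as $B_1 A_1^{-\alpha} A_1^k f - B_2 A_2^{-\alpha} A_2^k f$, pairing each term with the operator matching its power. For (1.3), the hypotheses (h1)--(h2) make $t\mapsto e^{-tA_j}f$ smooth into $\mathcal{E}_j$ and, since $B_j\in\mathscr{B}(\mathcal{E}_j,D)$,
\[
\tfrac{d^\ell}{dt^\ell}\bigl[B_j e^{-tA_j}f\bigr] = (-1)^\ell B_j e^{-tA_j}(A_j^\ell f),\qquad A_j^\ell f\in D_0.
\]
Applying (WDG) to $A_j^\ell f$ furnishes an exponential bound $C_{j,\ell}\, e^{-\mu_{j,\ell}/t}$ on $(0,1]$, whereas for $t\ge 1$ the bound (1.1) gives the uniform control $\|B_j\|\,\Lambda_j\,\|A_j^\ell f\|$. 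Since $k\ge 1$ and $\alpha\in(0,1)$ force $-k+\alpha-1<-1$, the weight $t^{-k+\alpha-1}$ is integrable at infinity against a bounded quantity and integrable near $0$ against the super-polynomial factor $e^{-\mu/t}$; this yields (1.3).

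For (1.4) I would start from the definition (1.2) of $A_j^{-\alpha}$ applied to $A_j^k f$ and use the commutation $A_j^k e^{-tA_j}f = (-1)^k \frac{d^k}{dt^k}e^{-tA_j}f$ together with the boundedness of $B_j$ to write
\[
B_j A_j^{-\alpha} A_j^k f = \frac{(-1)^k}{\Gamma(\alpha)}\int_0^\infty t^{\alpha-1}\,\tfrac{d^k}{dt^k}\bigl[B_j e^{-tA_j}f\bigr]\, dt,
\]
and then integrate by parts $k$ times against $t^{\alpha-1}$. After $m$ steps the weight is $t^{\alpha-1-m}$ and the derivative order is $k-m$; at $t\to 0$ the boundary term vanishes because (WDG) beats every polynomial, and at $t\to\infty$ it vanishes because $\alpha-1-m<0$ while the derivative stays uniformly bounded via (1.1). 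The constant accumulated is $(-1)^k\prod_{j=0}^{k-1}(\alpha-1-j)$, which combines with the leading $(-1)^k$ to produce the exact prefactor in (1.4); subtracting the identity for $j=2$ from that for $j=1$ closes the argument.

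The main obstacle I anticipate is a rigorous check that all the intermediate boundary terms do vanish. The cleanest route is to perform the integration by parts first on a truncated interval $[\varepsilon,1/\varepsilon]$, use (WDG) to push the boundary at $0$ and the bound (1.1) combined with $\alpha-1-m<0$ to push the boundary at $\infty$ to zero as $\varepsilon\to 0$, and then invoke (1.3), already established, to pass to the limit inside the surviving integrals by dominated convergence.
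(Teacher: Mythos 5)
Your argument is structurally close to the paper's, and the induction giving $A_1^kf=A_2^kf$, the rewriting of the left-hand side of \eqref{e4}, the $k$-fold integration by parts, and the bookkeeping of the constant $\prod_{j=0}^{k-1}(\alpha-1-j)$ are all fine. The genuine problem is that you invoke (WDG) at the two critical places: the integrability of \eqref{e3} near $t=0$ and the vanishing of the boundary terms at $t=0$ in the integration by parts. But (WDG) is \emph{not} a hypothesis of Lemma \ref{lemma1}; it is only defined before the lemma and is first assumed in Proposition \ref{proposition1} (``Suppose that the assumptions of Lemma \ref{lemma1} \emph{and} (WDG) are satisfied''), and the paper's proof of the lemma never uses it. Without (WDG) your estimates collapse: bounding each semigroup separately via \eqref{e1} only gives $\left\|\frac{d^\ell}{dt^\ell}[B_1e^{-tA_1}-B_2e^{-tA_2}]f\right\|\le \Lambda\|A_1^\ell f\|$, and $t^{-k+\alpha-1}$ times a merely bounded quantity is not integrable at $0$ for $k\ge 1$; likewise the boundary terms $t^{\alpha-1-m}\varphi^{(k-m-1)}(t)$ have no reason to vanish as $t\to 0$.

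The idea you are missing is that the required smallness near $t=0$ comes from a \emph{cancellation between the two semigroups}, not from decay of each one. Set $\varphi(t)=[B_1e^{-tA_1}-B_2e^{-tA_2}]f$. Since $A_1^mf=A_2^mf\in D_0$ for all $m\ge 0$ (by $\mathbf{(h2)}$ and your induction) and $B_1=B_2$ on $D_0$, every derivative $\varphi^{(m)}(0)=(-1)^m[B_1A_1^mf-B_2A_2^mf]$ vanishes. Taylor's formula with integral remainder at order $k$ then yields
\[
\left\|\varphi^{(\ell)}(t)\right\|\le \frac{\Lambda t^{k}}{k!}\,\|A_1^{k+\ell}f\|,
\]
so $t^{-k+\alpha-1}\varphi^{(\ell)}(t)=O(t^{\alpha-1})$ near $0$, which is integrable since $\alpha>0$, and all boundary terms at $0$ in the integration by parts vanish. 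With this replacement (and keeping your treatment of the behaviour at infinity, which is correct), the rest of your computation reproduces the paper's proof. As written, however, your argument only establishes the lemma under the extra hypothesis (WDG), which is strictly weaker than the statement.
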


\begin{proof}
First, note that, in light of $\bf{(h2)}$, \eqref{e2.0} implies
\begin{equation}\label{e2}
A_1^kf=A_2^kf,\quad k\ge 1,\; f\in D_0.
\end{equation}
Pick $k\ge 1$, $\ell \ge 0$ and $f\in D_0$. Let $\Lambda_j$  be the constant appearing in \eqref{e1}. If $\Lambda=\Lambda_1\|B_1\|+\Lambda_2\|B_2\|$, then we have
\[
\left\|t^{-k+\alpha-1}\frac{d^\ell}{dt^\ell}[B_1e^{-tA_1}-B_2e^{-tA_2}]f\right\|\le \Lambda t^{-k+\alpha-1}\max_{j=1,2}\|A_j^\ell f\|= \Lambda t^{-k+\alpha-1}\|A_1^\ell f\|.
\]
Thus
\begin{equation}\label{e5}
t^{-k+\alpha-1}\frac{d^\ell}{dt^\ell}[B_1e^{-tA_1}-B_2e^{-tA_2}]f\in L^1((1,\infty),D).
\end{equation}
Next, Taylor's formula yields
\[
\frac{d^\ell}{dt^\ell}e^{-tA_j}f=\sum_{m=0}^{k-1} \frac{t^m}{m!}\frac{d^{\ell+m}}{dt^{\ell+m}}e^{-tA_j}f{_{|t=0}}+\frac{t^k}{k!}\int_0^1(1-s)^{k-1}\frac{d^{k+\ell}}{dt^{k+\ell}}e^{-stA_j}f, \quad j=1,2.
\]
From \eqref{e2} we obtain
\[
\frac{d^{\ell+j}}{dt^{\ell+m}}e^{-tA_1}f{_{|t=0}}=-A_1^{\ell+m}f=-A_2^{\ell+m}f=\frac{d^{\ell+m}}{dt^{\ell+m}}e^{-tA_2}f{_{|t=0}}.
\]
Therefore
\begin{align*}
\frac{d^\ell}{dt^\ell}[B_1e^{-tA_1}&-B_2e^{-tA_2}]f
\\
&=-\frac{t^k}{k!}\int_0^1(1-s)^{k-1}[B_1e^{-stA_1}A_1^{k+\ell}f-B_2e^{-stA_2}A_2^{k+\ell}f]ds. 
\end{align*}
Hence 
\[
\left\|\frac{d^\ell}{dt^\ell}[B_1e^{-tA_1}-B_2e^{-tA_2}]f\right\|\le \frac{\Lambda t^k}{k!}\|A_1^{\ell+k} f\|,
\]
from which we derive that
\begin{equation}\label{e6}
t^{-k+\alpha-1}\frac{d^\ell}{dt^\ell}[B_1e^{-tA_1}-B_2e^{-tA_2}]f\in L^1((0,1),D).
\end{equation}
Combining \eqref{e0} and \eqref{e2}, we obtain
\begin{align*}
B_1A_1^{-\alpha}A_1^kf-B_2A_2^{-\alpha}A_1^kf&= \frac{1}{\Gamma(\alpha)}\int_0^\infty t^{\alpha-1}[B_1e^{-tA_1}-B_2e^{-tA_2}]A_1^kfdt
\\
&= \frac{1}{\Gamma(\alpha)}\int_0^\infty t^{\alpha-1}[B_1e^{-tA_1}A_1^kf-B_2e^{-tA_2}A_2^kf]dt
\\
&=\frac{(-1)^k}{\Gamma(\alpha)} \int_0^\infty t^{\alpha-1}\frac{d^k}{dt^k}[B_1e^{-tA_1}-B_2e^{-tA_2}]fdt. 
\end{align*}
We make  integration by parts $k$-times in the latest term. Note that by the boundedness of the semigroups and the fact that 
$\alpha \in (0, 1)$, the integrand at each  step goes to $0$ at infinity. Similarly, the integrand at each step is also $0$ at $0$. The reason is that by \eqref{e2.0} and $\bf{(h1)}$ and $\bf{(h2)}$,  the derivative at any order  of 
the function $t \mapsto \varphi(t) := [B_1e^{-tA_1}-B_2e^{-tA_2}]f$ is $0$ at $0$ and hence $\varphi(t)$ is bounded (up to a constant) by any power $t^j$ near $0$. Thus, we obtain  \eqref{e4}. 
\end{proof}

\begin{proposition}\label{proposition1}
Suppose that the assumptions of Lemma \ref{lemma1} and  (WDG) are satisfied. Let $f\in \tilde{\mathcal D}_0$.  Then the assumption 
\begin{equation}\label{e7}
B_1A_1^{-\alpha}f=B_2A_2^{-\alpha}f
\end{equation}
implies
\begin{equation}\label{e11}
B_1e^{-tA_1}f=B_2e^{-tA_2}f.
\end{equation}
\end{proposition}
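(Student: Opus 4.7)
The plan is to convert the single identity \eqref{e7} into an entire family of moment conditions for the $D$-valued curve $\varphi(t):=[B_1e^{-tA_1}-B_2e^{-tA_2}]f$, $t>0$, and then to extract $\varphi\equiv 0$ from these moments by a Laplace-transform argument made possible by the exponential smallness of $\varphi$ near $t=0$ furnished by \textbf{(WDG)}.

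First I will read \eqref{e7} as the identity $B_1A_1^{-\alpha}g=B_2A_2^{-\alpha}g$ valid for every $g\in\tilde{\mathcal D}_0$. Using \eqref{e2.0} with $\mathbf{(h2)}$, an easy induction yields $A_1^kf=A_2^kf$, and since $f=A_1h$ with $h\in D_0$ and $A_1(D_0)\subset D_0$ one also has $A_1^kf\in\tilde{\mathcal D}_0$ for every $k\ge 0$. Substituting $g=A_1^kf$ into \eqref{e7} annihilates the left-hand side of \eqref{e4} for each $k\ge 1$; the case $k=0$ is \eqref{e7} rewritten through the definition \eqref{e0}. Since $\alpha\in(0,1)$ the prefactor $\prod_{j=0}^{k-1}(\alpha-1-j)$ never vanishes, so
\[
\int_0^\infty t^{\alpha-1-k}\varphi(t)\,dt=0,\qquad k=0,1,2,\ldots
\]

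Next I will establish two-sided decay for $\varphi$. Near $t=0$, \textbf{(WDG)} applied to $f\in D_0$ gives $\|\varphi(t)\|\le Ce^{-\mu/t}$ for $0<t\le 1$. For large $t$, I will use $f=A_1h$ with $h\in D_0$ together with the standard bounded-analytic-semigroup bound $\|A_je^{-tA_j}\|_{\mathscr B(\mathcal E_j)}\le\widetilde C/t$ to write $\|e^{-tA_j}f\|=\|A_je^{-tA_j}h\|\le\widetilde C\|h\|/t$, so that $\|\varphi(t)\|\le C'/t$ for $t\ge 1$. Setting $u=1/t$ and $\tilde\varphi(u):=u^{-\alpha-1}\varphi(1/u)$, these estimates convert to $\|\tilde\varphi(u)\|\le Cu^{-\alpha}$ on $(0,1]$ and $\|\tilde\varphi(u)\|\le Cu^{-\alpha-1}e^{-\mu u}$ on $[1,\infty)$, so $\tilde\varphi\in L^1((0,\infty),D)$ has exponential tail at $\infty$, and the moment identities above become the classical ones $\int_0^\infty u^k\tilde\varphi(u)\,du=0$ for $k=0,1,2,\ldots$.

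Finally, the Laplace transform $L(s):=\int_0^\infty e^{-us}\tilde\varphi(u)\,du$ is holomorphic on $\{\operatorname{Re}s>-\mu\}$, and for $|s|<\mu$ dominated convergence with dominant $e^{|s|u}\|\tilde\varphi(u)\|\in L^1$ permits term-by-term integration of the series for $e^{-us}$, yielding
\[
L(s)=\sum_{k\ge 0}\frac{(-s)^k}{k!}\int_0^\infty u^k\tilde\varphi(u)\,du=0
\]
on a neighbourhood of $0$. Analytic continuation then forces $L\equiv 0$, and injectivity of the Laplace transform on $L^1$ gives $\tilde\varphi\equiv 0$; reverting the substitution delivers \eqref{e11}. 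The delicate point is this final step: it is the exponential decay of $\tilde\varphi$ at $\infty$ — the quantitative content of \textbf{(WDG)} transported by the change of variable — that makes $L$ analytic across $s=0$ and lets the moment series be identified with its Taylor expansion there. A direct Mellin-transform approach would struggle because of $\Gamma$-type growth of the Mellin transform as $\operatorname{Re}s\to-\infty$, which spoils any Carlson-style uniqueness.
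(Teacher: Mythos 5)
Your proof is correct and follows essentially the same route as the paper's: vanishing moments extracted from \eqref{e4}, the substitution $s=1/t$, and the (WDG)-driven analyticity of the Laplace transform on a half-plane containing the origin, forcing it to vanish identically. The only cosmetic difference is your normalization $\tilde\varphi(u)=u^{-\alpha-1}\varphi(1/u)$, which obliges you to invoke the analytic-semigroup bound $\|tA_je^{-tA_j}\|\le C$ for integrability near $u=0$, whereas the paper's choice $T(s)=s^{-\alpha}\varphi(1/s)$ needs only boundedness of the semigroups.
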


\begin{proof}
Pick $f\in \tilde{\mathcal D}_0$. Since $\tilde{\mathcal D}_0$ is invariant under $A_1^k$, $k\ge 1$, in light of \eqref{e4} we obtain  from \eqref{e7}
\begin{equation}\label{e8}
\int_0^\infty t^{\alpha-1-k}[B_1e^{-tA_1}-B_2e^{-tA_2}]fdt=0,\quad k\ge 1.
\end{equation}
The change of variables $s=1/t$ gives
\[
\int_0^\infty s^{-\alpha-1+k}[B_1e^{-(1/s)A_1}-B_2e^{-(1/s)A_2}]fds=0,\quad k\ge 1.
\]
Substituting $k-1$ by $k$, we find
\[
\int_0^\infty s^{-\alpha+k}[B_1e^{-(1/s)A_1}-B_2e^{-(1/s)A_2}]fds=0,\quad k\ge 0.
\]
That is we have
\begin{equation}\label{e9}
\int_0^\infty s^kT(s)ds=0,\quad k\ge 0,
\end{equation}
where 
\[
T(s)=s^{-\alpha}[B_1e^{-(1/s)A_1}-B_2e^{-(1/s)A_2}]f,\quad s>0.
\]

Since the semigroups are bounded, we see   that $e^{-\lambda s}T\in L^1((0,1),D)$ for any $\lambda \in \mathbb{C}$. On the other hand, using  (WDG)  we infer
\begin{equation}\label{e10}
\|B_je^{-tA_j}f\|\le Ce^{-\mu/t },\quad j=1,2,\;  0<t\le 1,
\end{equation}
where $C=\max_{j=1,2}C_j(A_j,f,B_j)$ and $\mu=\min_{j=1,2}\mu_j(A_j,f,B_j)$.

Inequality \eqref{e10} yields
 \[
 \|e^{-\lambda s}T(s)\|\le 2Cs^{-\alpha}e^{-(\Re \lambda +\mu)s},\quad s\ge 1.
 \]
 Hence $e^{-\lambda s}T\in L^1((0,\infty),D)$ for any $\lambda\in \mathbb{C}$ with $\Re \lambda >-\mu$.
 
Let $L$ be the Laplace transform of $T$
\[
L(\lambda)=\int_0^\infty e^{-\lambda s}T(s)ds ,\quad \Re \lambda >-\mu.
\]
In that case \eqref{e9} means that
\[
L^{(k)}(0)=0,\quad k\ge 0.
\]
As $L$ is analytic in $\{z\in \mathbb{C};\; \Re z>-\mu\}$ we derive $L=0$. The uniqueness of the Laplace transform implies that  $T=0$. This  proves \eqref{e11}.
\end{proof}

\begin{corollary}\label{cor1}
Suppose that \eqref{e7} holds for all $f \in \tilde{\mathcal D}_0$. Then 
\begin{equation}\label{e001}
B_1e^{-tA_1}f=B_2e^{-tA_2}f, \quad f \in D_0.
\end{equation}
\end{corollary}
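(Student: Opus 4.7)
The plan is to extend the conclusion \eqref{e11} of Proposition \ref{proposition1} from $\tilde{\mathcal D}_0$ to $D_0$ by a one-step antiderivative argument. Fix $f \in D_0$. By assumption $\bf{(h2)}$ together with \eqref{e2.0}, we have $A_1 f = A_2 f \in \tilde{\mathcal D}_0$. Hence Proposition \ref{proposition1} applies to $A_1 f$ and gives
\[
B_1 e^{-tA_1}(A_1 f) = B_2 e^{-tA_2}(A_2 f),\quad t>0.
\]

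Next, I use that $e^{-tA_j}$ commutes with $A_j$ on $D(A_j) \supset D_0$ (by $\bf{(h1)}$) to rewrite each side as a $t$-derivative:
\[
-\frac{d}{dt}\bigl[B_j e^{-tA_j} f\bigr] = B_j A_j e^{-tA_j} f = B_j e^{-tA_j} A_j f, \quad j=1,2.
\]
The previous equality therefore says that the $D$-valued map $t \mapsto B_1 e^{-tA_1} f - B_2 e^{-tA_2} f$ has vanishing derivative on $(0,\infty)$, hence is constant on this interval.

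To identify the constant, I let $t \to 0^+$. Since $-A_j$ generates a bounded analytic semigroup, $e^{-tA_j}$ is in particular a $C_0$-semigroup, so $e^{-tA_j} f \to f$ strongly in $\mathcal{E}_j$; combined with the boundedness of $B_j : \mathcal{E}_j \to D$, this yields $B_j e^{-tA_j} f \to B_j f$ in $D$. The hypothesis \eqref{e2.0} ensures $B_1 f = B_2 f$, so the constant is zero, which proves \eqref{e001}.

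There is essentially no obstacle in this argument; the only point requiring care is the strong continuity at $t = 0^+$, which is automatic from the definition of bounded analytic semigroup used in the paper (Subsection \ref{sub1}). The mechanism is transparent: since $D_0$ is mapped into $\tilde{\mathcal D}_0$ by $A_j$, one gains the statement on $D_0$ from the statement on $\tilde{\mathcal D}_0$ by integrating once, with the matching initial values at $t=0$ supplied by $B_1|_{D_0} = B_2|_{D_0}$.
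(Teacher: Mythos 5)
Your argument is correct and coincides with the paper's own proof: apply Proposition \ref{proposition1} to $A_1 f = A_2 f \in \tilde{\mathcal D}_0$, recognize the resulting identity as equality of $t$-derivatives of $B_j e^{-tA_j}f$, and integrate using $B_1 f = B_2 f$ to fix the constant. The extra care you take with strong continuity at $t=0^+$ is a welcome but minor elaboration of the same mechanism.
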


\begin{proof} Let $f \in D_0$. Then  $A_1f=A_2f \in \tilde{\mathcal D}_0$ and obtain from the previous proposition
\[
B_1e^{-tA_1}A_1f=B_2e^{-tA_2}A_2f.
\]
This means that
\[
\frac{d}{dt} B_1 e^{-tA_1}f = \frac{d}{dt} B_2 e^{-tA_2}f.
\]
We integrate this equality and take into account the assumption  $B_1f = B_2 f$ to obtain 
 the desired equality.
\end{proof}

\subsection{From the heat equation  to the wave equation: the transmutation formula}

Let $\mathcal{E}$ be a complex separable Hilbert space and $A$ a nonnegative self-adjoint operator on $\mathcal{E}$. Denote the spectral resolution of $A$ by $(E_\lambda)$. We recall that for every bounded Borel function $\varphi: [0,\infty )\rightarrow \mathbb{C}$, the operator $\varphi(A)$ is defined as follows 
\[
\varphi(A)=\int_0^\infty \varphi (\lambda )dE_\lambda.
\]
In particular, we have
\[
e^{-zA}=\int_0^\infty e^{-\lambda z}dE_\lambda,\quad \Re z>0,
\]
and
\[
\frac{\sin \left(tA^{1/2}\right)}{A^{1/2}}=\int_0^\infty \frac{\sin \left(t\lambda^{1/2}\right)}{\lambda^{1/2}}dE_\lambda,\quad t>0.
\]

Consider the wave equation
\begin{equation}\label{e12}
u''(t)+Au(t)=f(t),\quad u(0)=u'(0)=0.
\end{equation}

Let $f\in C_0^\infty((0,\infty),\mathcal{E})$. Then it is well known that  \eqref{e12} admits a unique solution $u=W_A(f)\in C^\infty ((0,\infty),\mathcal{E})$  given by
\begin{equation}\label{e13}
W_A(f)(t)=\int_0^t \frac{\sin \left((t-s)A^{1/2}\right)}{A^{1/2}}f(s)ds,\quad t\ge 0.
\end{equation}
This formula can be easily obtained from the results in \cite[Chapter 6, Section 2, page 490]{Ta} or even directly.

Let $\mathcal{E}_j$ be a complex separable Hilbert space and $A_j$ a nonnegative selfadjoint operator on $\mathcal{E}_j$, $j=1,2$. Let $D_0$ and $D$ be two non trivial closed subspaces of $\mathcal{E}_1\cap \mathcal{E}_2$ and let $B_j\in \mathscr{B}(\mathcal{E}_j,D)$, $j=1,2$.

\begin{proposition}\label{proposition2}
The condition
\begin{equation}\label{e15}
B_1e^{-tA_1}f=B_2e^{-tA_2}f,\quad  f\in D_0,
\end{equation}
yields
\begin{equation}\label{e14}
B_1W_{A_1}(f)=B_2W_{A_2}(f),\quad  f\in  C_0^\infty((0,\infty) ,D_0).
\end{equation}
\end{proposition}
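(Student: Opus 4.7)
The goal is to transfer the equality of heat semigroups to an equality of the sine families $B_j A_j^{-1/2}\sin(tA_j^{1/2})$ on $D_0$, at which point the Duhamel representation \eqref{e13} immediately yields \eqref{e14}. The bridge I would use is the elementary scalar identity
\[
\int_0^\infty e^{-\lambda t}\frac{\sin(t\mu^{1/2})}{\mu^{1/2}}\,dt =\frac{1}{\lambda^2+\mu}=\int_0^\infty e^{-\lambda^2 s}e^{-s\mu}\,ds,
\]
valid for $\lambda>0$ and $\mu\ge 0$.

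Via the spectral theorem applied to the nonnegative selfadjoint operator $A_j$, and dominated convergence in the spectral integral, this identity lifts to
\[
\int_0^\infty e^{-\lambda t}\frac{\sin(tA_j^{1/2})}{A_j^{1/2}}f\,dt=\int_0^\infty e^{-\lambda^2 s}e^{-sA_j}f\,ds,\quad f\in D_0,\ \lambda>0.
\]
Both Bochner integrals converge in $\mathcal{E}_j$: the spectral bound $\|\sin(tA_j^{1/2})/A_j^{1/2}\|\le t$ (immediate from $|\sin x|\le|x|$) controls the left-hand side, while $\|e^{-sA_j}\|\le 1$ controls the right-hand side.

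Applying the bounded operator $B_j$ to both sides (it commutes with the Bochner integral) and invoking hypothesis \eqref{e15} on the right, I conclude that the vector-valued Laplace transforms
\[
\lambda\mapsto \int_0^\infty e^{-\lambda t}B_j\frac{\sin(tA_j^{1/2})}{A_j^{1/2}}f\,dt,\quad j=1,2,
\]
coincide for every $\lambda>0$. Since both $D$-valued integrands are strongly continuous in $t$ and bounded by $t\,\|B_j\|\,\|f\|$, injectivity of the Laplace transform on at most polynomially growing continuous functions yields
\[
B_1\frac{\sin(tA_1^{1/2})}{A_1^{1/2}}f=B_2\frac{\sin(tA_2^{1/2})}{A_2^{1/2}}f,\quad t>0,\ f\in D_0.
\]

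To finish, let $f\in C_0^\infty((0,\infty),D_0)$. The representation \eqref{e13} gives
\[
B_j W_{A_j}(f)(t)=\int_0^t B_j\frac{\sin((t-s)A_j^{1/2})}{A_j^{1/2}}f(s)\,ds,
\]
and since $f(s)\in D_0$ for every $s$, the integrands for $j=1,2$ agree by the previous step. The main technical subtlety is the justification of the spectral-theoretic Fubini manipulation that produces the operator Laplace identity, together with the interchange of $B_j$ with the Bochner integral; both are routine once the pointwise bounds above are in place, so I do not expect a serious obstacle.
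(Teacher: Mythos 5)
Your proof is correct, and its overall strategy coincides with the paper's: reduce \eqref{e14} to the equality of the sine families $B_j\sin(tA_j^{1/2})A_j^{-1/2}$ on $D_0$ by a Laplace-transform uniqueness argument, then conclude from the Duhamel representation \eqref{e13}. The only genuine difference is the bridge between the heat semigroup and the sine family. The paper uses the Gaussian transmutation formula
\[
e^{-tA_j}f=\frac{t^{-3/2}}{4\sqrt{\pi}}\int_0^\infty e^{-\tau/(4t)}\frac{\sin (\sqrt{\tau}A_j^{1/2})}{A_j^{1/2}}f\,d\tau,
\]
which exhibits $e^{-tA_j}f$ directly as a Laplace transform (in $\tau$, at the point $1/(4t)$) of the sine family, so the hypothesis \eqref{e15} immediately forces the two Laplace transforms to agree. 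You instead pass through the resolvent, using
$\int_0^\infty e^{-\lambda t}\sin(t\mu^{1/2})\mu^{-1/2}\,dt=(\lambda^2+\mu)^{-1}=\int_0^\infty e^{-\lambda^2 s}e^{-s\mu}\,ds$,
so the common value of the two transforms is identified as $B_j(\lambda^2+A_j)^{-1}f$. Both routes require the same justifications (Fubini with the spectral measure, using $\|\sin(tA_j^{1/2})A_j^{-1/2}f\|\le t\|f\|$, commuting the bounded $B_j$ with the Bochner integral) and the same final input (injectivity of the vector-valued Laplace transform on continuous functions of polynomial growth). Your version is marginally more elementary in that it avoids verifying the subordination formula, at the cost of one extra Laplace-transform identification; the paper's version is a one-line consequence of a formula already quoted from \cite{FGKU}. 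Either way the argument is complete.
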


\begin{proof}
 Similarly to \cite[(3.7)]{FGKU} we have the following transmutation formula
 \[
 e^{-tA_j}f=\frac{t^{-3/2}}{4\sqrt{\pi}}\int_0^\infty e^{-\tau/(4t)}\frac{\sin (\sqrt{\tau}A_j^{1/2})}{A_j^{1/2}}fd\tau,\quad f\in \mathcal{E}_j,\; t>0,\; j=1,2.
 \]
Under assumption \eqref{e15}, the preceding transmutation formula yields
\begin{align*}
&\int_0^\infty e^{-t\tau}B_1\frac{\sin (\sqrt{\tau}A_1^{1/2})}{A_1^{1/2}}fd\tau
\\
&\hskip3cm =\int_0^\infty e^{-t\tau}B_2\frac{\sin (\sqrt{\tau}A_2^{1/2})}{A_2^{1/2}}fd\tau,\quad f\in D_0,\; t>0.
\end{align*}
Fix $f\in D_0$. In that case we can rewrite the preceding formula as 
\[
\mathcal{L}_1(t)= \mathcal{L}_2(t),\quad t>0,
\]
where
\[
\mathcal{L}_j(\lambda)=\int_0^\infty e^{-\lambda \tau}B_j\frac{\sin (\sqrt{\tau}A_j^{1/2})}{A_j^{1/2}}fd\tau,\quad  \Re \lambda >0,\; j=1,2.
\]
Using the analyticity of $\mathcal{L}_j$, $j=1,2$ and the uniqueness of the Laplace transform, we obtain
\[
B_1\frac{\sin (\tau A_1^{1/2})}{A_1^{1/2}}f=B_2\frac{\sin (\tau A_2^{1/2})}{A_2^{1/2}}f,\quad \tau >0,\; f\in D_0.
\]
In light of  \eqref{e13}, we readily derive  \eqref{e14} from the last equality.
\end{proof}

\subsection{Proof of Theorem \ref{theorem0}}

Let $(M_1,g_1)$, $(M_2,g_2)$ satisfy the assumptions of Theorem \ref{theorem0} and set for simplicity of notation 
$A_1 = \Delta_{g_1}$ and $A_2 = \Delta_{g_2}$.  As we have seen in the preceding subsection, since $A_j$, $j=1,2$ is nonnegative selfadjoint operator, it is the generator of  an analytic semigroup $e^{-tA_{j}}$ with
\[
\| e^{-tA_{j}}\|\le 1,\quad t\ge 0.
\]
On the other hand, it is clear  that, for an arbitrary $\Omega \Subset O$, assumptions $\mathbf{(h1)}$ and $\mathbf{(h2)}$ hold with $D_0=C_0^\infty (\Omega)$ and $A_j= \Delta_{g_j}$, $j=1,2$.

The heat kernel associated to $A_{j}$ will be denoted in the sequel by $p_j(t,x,y)$. It is well known that $p_j(t,x,y)\in C^\infty ((0,\infty)\times M_j\times M_j)$, $j=1,2$ (e.g. \cite[Theorem 5.2.1]{Dav}).

Pick $x_0,y_0\in O$ with $x_0\ne y_0$. Then there exists $\Omega_j\Subset O$, $j=0,1$ satisfying $x_0\in \Omega_0$, $y_0\in \Omega_1$ and $\overline{\Omega}_0\cap \overline{\Omega}_1=\emptyset$.  Define $B_j\in \mathscr{B}(L^2(M_j,d\mu_{g_j}))$ as follows
\[
B_jh=\chi_{\Omega_0}h,\quad h\in L^2(M_j,d\mu_{g_j}).
\]

Let $f\in C_0^\infty (\Omega_1)$ and $h\in L^2(M_j,d\mu_{g_j})$.  It follows from Theorem \ref{theorem-appendix} in Subsection \ref{sub2} that  $e^{-tA_{j}}$ satisfies the Davies-Gaffney property. Thus
\begin{align*}
\left|\int_{M_j}\overline{h}B_je^{-tA_{j}}fd\mu_{g_j} \right|&=\left|\int_{\Omega_0}\overline{h}e^{-tA_{j}}fd\mu_{g_j}\right|
\\
&\qquad \le e^{-d^2/(4t)}\|h\|\|f\|,\quad t>0,\; j=1,2,
\end{align*}
where $d=\mathrm{dist}(\overline{\Omega}_0,\overline{\Omega}_1)$ and $\|\cdot\|$ denotes the norm of either $L^2(M_1,d\mu_{1})$ or $L^2(M_2,d\mu_{2})$.  Here
\[
\mathrm{dist}(\overline{\Omega}_0,\overline{\Omega}_1)=\inf\{d_g(x,y);\; x\in \overline{\Omega}_0,\; y\in \overline{\Omega}_1\},
\]
where $d_g$ is the Riemannian metric associated with $g:=g_1{_{|O_1}}=g_2{_{|O_2}}$.

Thus we have
\[
\|B_je^{-tA_{j}}f\|\le e^{-d^2/(4t)}\|f\|,\quad t>0,\; j=1,2.
\]
That is $(A_{j},B_j,f)$, $j=1,2$, satisfies the property (WDG).

We apply Corollary \ref{cor1} and obtain 
\[
B_1e^{-tA_{1}}f = B_2e^{-tA_{2}}f,\quad t>0,\; f\in C_0^\infty (\Omega_1).
\]
Thus, 
\[
\int_{\Omega_1}p_1(t,x_0,y)f(y)d\mu_g=\int_{\Omega_1}p_2(t,x_0,y)f(y)d\mu_g,\quad  t>0,\; f\in C_0^\infty (\Omega_1).
\]
Taking in this identify $f=f_k$, $k\ge 1$, where $(f_k)$ is a sequence of $C_0^\infty (\Omega_1)$ converging to $\delta_{y_0}$, we derive
\[
p_1(t,x_0,y_0)=p_2(t,x_0,y_0),\quad t>0.
\]
That is we have
\[
p_1(t,x_0,y_0)=p_2(t,x_0,y_0),\quad t>0,\; (x_0,y_0)\in \{(x,y)\in O\times O;\;x\ne y\}.
\]
A simple continuity argument shows that 
\begin{equation}\label{d1}
p_1(t,x_0,y_0)=p_2(t,x_0,y_0),\quad t>0,\; (x_0,y_0)\in O\times O.
\end{equation}
Let $D_0=C_0^\infty (O)$ and $B_j\in \mathscr{B}_j(L^2(M_j,d\mu_{g_j}),L^2(O))$ defined as follows
\[
B_jh=\chi_Oh,\quad h\in L^2(M_j,d\mu_{g_j}).
\]
Equality \eqref{d1} yields
\[
Be^{-tA_{1}}f=Be^{-tA_{2}}f,\quad f\in D_0
\]
which once combined with Proposition \ref{proposition2}, implies 
\[
\chi_OW_{A_{1}}f(t)=\chi_OW_{A_{2}}f(t),\quad t\ge 0,\; f\in C_0^\infty ((0,\infty)\times O).
\]
Theorem \ref{theorem0} follows then from \cite[Theorem 2]{HLOS}.

\subsection{Proof of Theorem \ref{theorem0.0}}

Let the assumptions of Theorem \ref{theorem0.0} be satisfied. From
\begin{align*}
\lambda_k^j\int_O\phi_k^j \overline{f}d\mu_{g_j}=\int_O-\Delta_{g_j}&\phi_k^j \overline{f}d\mu_{g_j}
\\
&=\int_O\phi_k^j \overline{\Delta_{g_j}f}d\mu_{g_j},\quad k\ge 0,\; j=1,2,\; f\in C_0^\infty (O),
\end{align*}
we derive that $A_1N_\ell=A_2N_\ell \subset N_\ell$.

We proceed as in the preceding proof in order to get
\begin{equation}\label{ee1}
B_1e^{-tA_{1}}h=B_2e^{-tA_{2}}h,\quad t>0,\; h\in N_\ell.
\end{equation}
Set
\[
T_\ell=\prod_{j=1,2,\; 0\le k\le \ell}(A_{g_j}-\lambda_k^j).
\]
Then one can check in a straightforward manner that $T_\ell(C_0^\infty(O))\subset N_\ell$. In particular \eqref{ee1} yields
\begin{equation}\label{ee2}
B_1e^{-tA_{1}}T_\ell f=B_2e^{-tA_{2}}T_\ell f,\quad t>0,\; f\in C_0^\infty(O).
\end{equation}
We multiply  this equality by  $e^{t \lambda_1^0}$ and set $T_\ell =(A_{1}-\lambda_1^0)\tilde{T}_\ell$. We can  rewrite \eqref{ee2} as follows
\begin{align}
B_1e^{-t(A_{1}-\lambda_1^0)}&(A_{1}-\lambda_1^0)\tilde{T}_\ell f \label{ee3}
\\
&=B_2e^{-t(A_{2}-\lambda_1^0)}(A_{2}-\lambda_1^0)\tilde{T}_\ell f,\quad t>0,\; f\in C_0^\infty(O),\nonumber
\end{align}
where we used the fact that the restriction to $C_0^\infty (O)$ of the operators $(A_{j}-\lambda_k^j)$, $j=1,2$, $0\le k\le n$, commute mutually, and $A_{1}$ and $A_{2}$ coincide on $C_0^\infty(O)$. That is we have
\begin{equation}\label{ee4}
\frac{d}{dt}B_1e^{-t(A_{1}-\lambda_1^0)}\tilde{T}_\ell f=\frac{d}{dt}B_2e^{-t(A_{2}-\lambda_1^0)}\tilde{T}_\ell f,\quad t>0,\; f\in C_0^\infty(O).
\end{equation}
As we have done in the proof of Theorem \ref{theorem0}, we find by integrating \eqref{ee4} 
\[
B_1e^{-t(A_{1}-\lambda_1^0)}\tilde{T}_\ell f=B_2e^{-t(A_{2}-\lambda_1^0)}\tilde{T}_\ell f,\quad t>0,\; f\in C_0^\infty(O).
\]
Thus,
\[
B_1e^{-tA_{1}}\tilde{T}_\ell f=B_2e^{-tA_{2}}\tilde{T}_\ell f,\quad t>0,\; f\in C_0^\infty(O).
\]
We repeat the same argument with $T_\ell$ substituted by $\tilde{T}_\ell$ to derive that \eqref{ee2} holds with $T_\ell^1$ instead of $T_\ell$ where
\[
T_\ell^m=\prod_{j=1,2,\; m\le k\le \ell}(A_j-\lambda_k^j),\quad 1\le m\le \ell.
\]
By an induction in $m$ we obtain in a final step
\[
B_1e^{-tA_{1}}f=B_2e^{-tA_{2}}f,\quad t>0,\; f\in C_0^\infty(O).
\]
The rest of the proof is exactly the same as that of Theorem \ref{theorem0}.

\section{Additional comments}

\subsection{From an initial-to-solution operator for the heat equation to an interior spectral data}

Let $(M_j,g_j)$, $j=1,2$, be a closed Riemannian manifold. Denote by $(\lambda_k^j)_{k\ge 1}$ the sequence of distinct eigenvalues of $\Delta_{g_j}$. The eigenspace associated with $\lambda_k^j$ is denoted by $E_k^j$, $k\ge 1$. We fix then $(\phi_{k,\ell}^j)_{1\le \ell \le m_k^j}$ an orthonormal basis of $E_k^j$, where $m_k^j=\mathrm{dim}(E_k^j)$, $k\ge 1$.

\begin{proposition}\label{proposition3}
Let $\ell \ge 0$ be an integer and $(M_i,g_j)$ be a closed Riemannian manifolds, $j=1,2$. Assume that there exists an open subset $O_j$ of $M_j$, $j=1,2$, such that $(O_1,g_1)=(O_2,g_2):=(O,g)$  and let $N_\ell$ be as in Theorem \ref{theorem0.0}. If 
\begin{equation}\label{sp1.0}
\chi_Oe^{-tA_{g_1}}f=\chi_Oe^{-tA_{g_2}}f,\quad t\ge 0,\;  f\in N_\ell,
\end{equation}
then, for all $k\ge 1$, $m_k^1=m_k^2:=m_k$ and
\[
\lambda_k^1=\lambda_k^2,\quad \phi_{k,\ell}^1{_{|O}}=P_k\phi_{k,\ell}^2{_{|O}},\quad 1\le \ell\le m_k,
\]
where $P_k$ is an orthogonal matrix.
\end{proposition}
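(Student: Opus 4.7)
My approach has four steps: \textbf{(i)} upgrade the hypothesis from $N_\ell$ to all of $C_0^\infty(O)$; \textbf{(ii)} convert the equality of heat operators into equality of heat kernels on $O\times O$; \textbf{(iii)} use linear independence of exponentials in the spectral expansion to match eigenvalues and spectral projection kernels; \textbf{(iv)} extract the orthogonal matrix $P_k$.

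For (i), I would reproduce verbatim the inductive argument in the proof of Theorem~\ref{theorem0.0}: with the operator $T_\ell$ defined there, apply the hypothesis to $T_\ell f$ for $f\in C_0^\infty(O)$ (since $T_\ell(C_0^\infty(O))\subset N_\ell$), then peel off each factor $(A_{g_j}-\lambda_k^j)$ one at a time by multiplying by $e^{t\lambda_k^j}$, recognising a $t$-derivative, integrating in $t$, and using $A_{g_1}f=A_{g_2}f$ on $C_0^\infty(O)$. After finitely many iterations this yields
\[
\chi_O e^{-tA_{g_1}}f=\chi_O e^{-tA_{g_2}}f,\quad t\ge 0,\ f\in C_0^\infty(O).
\]
For (ii), I would mimic the concluding argument from the proof of Theorem~\ref{theorem0}: for $x_0\ne y_0$ in $O$, test against a sequence in $C_0^\infty(O)$ approximating $\delta_{y_0}$ and localise near $x_0$ to obtain $p_1(t,x_0,y_0)=p_2(t,x_0,y_0)$, then extend to all of $O\times O$ by continuity of the heat kernels.

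For (iii), I would expand via the spectral theorem
\[
p_j(t,x,y)=\sum_{k\ge 1}e^{-\lambda_k^j t}K_k^j(x,y),\qquad K_k^j(x,y)=\sum_{i=1}^{m_k^j}\phi_{k,i}^j(x)\overline{\phi_{k,i}^j(y)}.
\]
Each $K_k^j$ is real-analytic on $M_j\times M_j$ and cannot vanish identically on $O\times O$ by unique continuation of eigenfunctions. The uniqueness of Dirichlet series, applied pointwise in $(x,y)$ to $p_1(t,x,y)=p_2(t,x,y)$, then forces the two sequences of distinct eigenvalues to coincide as sets (so $\lambda_k^1=\lambda_k^2$ after reordering) together with the kernel identities $K_k^1=K_k^2$ on $O\times O$; the rank of this common kernel (viewed, say, via sampling at distinct points) gives $m_k^1=m_k^2=:m_k$.

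The main obstacle is step (iv). I would choose points $x_1,\ldots,x_{m_k}\in O$ at which the $m_k\times m_k$ matrix $\bigl(\phi_{k,i}^1(x_p)\bigr)_{i,p}$ is invertible, possible since the restrictions $\phi_{k,i}^1|_O$ are linearly independent by unique continuation, and then define $P_k$ as the unique matrix satisfying $\phi_{k,i}^2(x_p)=\sum_{i'}(P_k)_{ii'}\phi_{k,i'}^1(x_p)$ for each $p$. Substituting into $K_k^1=K_k^2$ and using invertibility at the sample points shows both the pointwise identity $\phi_{k,i}^2(x)=\sum_{i'}(P_k)_{ii'}\phi_{k,i'}^1(x)$ for all $x\in O$ and the Gram relation $P_k^{\ast}P_k=I$; hence $P_k$ is orthogonal (the eigenfunctions being chosen real). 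Steps (i)--(iii) are essentially transcriptions of arguments already written out in the paper, so the only genuinely new ingredient is this final linear-algebraic extraction of $P_k$ from the kernel identity combined with unique continuation.
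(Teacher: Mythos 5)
Your proposal is correct and follows essentially the same route as the paper: reduce from $N_\ell$ to $C_0^\infty(O)$ via the inductive peeling argument of Theorem \ref{theorem0.0}, expand the heat semigroup spectrally, invoke uniqueness of Dirichlet series to match eigenvalues and the spectral projection kernels on $O\times O$, and use unique continuation to guarantee the relevant coefficients do not all vanish and that the restricted eigenfunctions $\phi_{k,i}^j|_O$ are linearly independent. The only divergence is at the last step: the paper simply cites \cite[Lemma 2.3]{CK} to pass from the kernel identity $\sum_i\phi_{k,i}^1(x)\phi_{k,i}^1(y)=\sum_i\phi_{k,i}^2(x)\phi_{k,i}^2(y)$ on $O\times O$ to the existence of the orthogonal matrix $P_k$, whereas you reprove this fact by hand via sampling at points where $\bigl(\phi_{k,i}^1(x_p)\bigr)_{i,p}$ is invertible; your argument is sound (it yields $\Phi^1=P_k^{T}\Phi^2$ on $O$, hence $P_k^{T}P_k=I$ by invertibility at the sample points) and makes the proof self-contained at the cost of a small amount of linear algebra that the paper outsources to the cited lemma.
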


\begin{proof}
First, let us see that from the proof of Theorem \ref{theorem0.0} we know that \eqref{sp1.0} implies
\begin{equation}\label{sp1}
\chi_Oe^{-tA_{g_1}}f=\chi_Oe^{-tA_{g_2}}f,\quad t\ge 0,\;  f\in C_0^\infty (O),
\end{equation}

Let $y\in O$ and $f\in C_0^\infty (O)$. We rewrite \eqref{sp1} in the form
\[
\sum_{k\ge 1}e^{-\lambda_k^1t}\sum_{\ell=1}^{m_k^1}(f,\phi_{k,\ell}^1)_g\phi_{k,\ell}^1(y)=\sum_{k\ge 1}e^{-\lambda_k^2t}\sum_{\ell=1}^{m_k^2}(f,\phi_{k,\ell}^2)_g\phi_{k,\ell}^2(y),\quad t\ge 0.
\]
By the uniqueness of Dirichlet series we derive that $\lambda_k^1=\lambda_k^2$, $k\ge 1$, and
\[
\sum_{\ell=1}^{m_k^1}(f,\phi_{k,\ell}^1)_g\phi_{k,\ell}^1(y)=\sum_{\ell=1}^{m_k^2}(f,\phi_{k,\ell}^2)_g\phi_{k,\ell}^2(y),\quad k\ge 1.
\]
Therefore
\[
\sum_{\ell=1}^{m_k^1}\phi_{k,\ell}^1(x)\phi_{k,\ell}^1(y)=\sum_{\ell=1}^{m_k^2}\phi_{k,\ell}^2(x)\phi_{k,\ell}^2(y),\quad k\ge 1,\; (x,y)\in O\times O.
\]
Since $(\phi_{k,1}^j,\ldots \phi_{k,m_k^j}^j)$ are linearly independent in $L^2(O)$ as a straightforward consequence of the unique continuation property of $\Delta_g$, we complete the proof by using \cite[Lemma 2.3]{CK}.
\end{proof}

In light of this proposition, we retrieve \cite[Proposition 2.1]{Fe} in which we removed the extra assumptions that $(O_j,g_j{_{|O_j}})$ belongs locally in a Gevery class $j=1,2$.

Proposition \ref{proposition3} says that, under the assumption \eqref{sp1}, we have $\lambda_k^1=\lambda_k^2$, $k\ge 1$, and there exist two orthonormal basis $(\varphi_k^1)$ and $(\varphi_k^2)$, consisting of eigenfunctions of $\Delta_{g_1}$ and $\Delta_{g_2}$ respectively, such that
\[
\varphi_k^1{_{|O}}=\varphi_k^2{_{|O}},\quad k\ge 0.
\]
We apply then \cite[Corollary 2]{HLOS} in order to find a diffeomorphism $F:M_1\rightarrow M_2$ such that $F^\ast g_2=g_1$.

In other words we proved the following theorem

\begin{theorem}\label{theoremh}
$\ell \ge 0$ be an integer and  $(M_j,g_j)$ be  closed Riemannian manifolds, $j=1,2$. Assume that there exists an open subset $O_j$ of $M_j$, $j=1,2$, such that $(O_1,g_1)=(O_2,g_2):=(O,g)$  and let $N_\ell$ be as in Theorem \ref{theorem0.0}. If 
\[
\chi_Oe^{-tA_{g_1}}f=\chi_Oe^{-tA_{g_2}}f,\quad t\ge 0,\;  f\in N_\ell,
\]
then we find a diffeomorphism $F:M_1\rightarrow M_2$ such that $F^\ast g_2=g_1$.
\end{theorem}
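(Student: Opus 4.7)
The plan is to deduce Theorem~\ref{theoremh} directly from Proposition~\ref{proposition3} together with the geometric rigidity result \cite[Corollary 2]{HLOS}; no new analytic input should be needed, since the hypothesis is already phrased in terms of equality of heat operators on $N_\ell$, which is exactly the setting of the proposition.

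First, I would invoke Proposition~\ref{proposition3} on the given data. This immediately yields $\lambda_k^1=\lambda_k^2=:\lambda_k$ and $m_k^1=m_k^2=:m_k$ for every $k\ge 1$, together with orthogonal matrices $P_k\in O(m_k)$ such that
\[
\phi_{k,\ell}^1\big|_O=\sum_{m=1}^{m_k}(P_k)_{\ell m}\,\phi_{k,m}^2\big|_O,\qquad 1\le\ell\le m_k.
\]

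Second, I would absorb each $P_k$ into a change of orthonormal basis on the $M_2$ side. Since $P_k$ is orthogonal and $E_k^2$ is an eigenspace of $\Delta_{g_2}$ for $\lambda_k$, the rotated family $\varphi_{k,\ell}^2:=\sum_m (P_k)_{\ell m}\phi_{k,m}^2$ is still an orthonormal basis of $E_k^2$ consisting of eigenfunctions of $\Delta_{g_2}$ with eigenvalue $\lambda_k$. Setting $\varphi_{k,\ell}^1:=\phi_{k,\ell}^1$, we obtain orthonormal bases $(\varphi_{k,\ell}^j)$ of $L^2(M_j)$ for $j=1,2$, made of eigenfunctions associated to the common eigenvalues $\lambda_k$, and satisfying
\[
\varphi_{k,\ell}^1\big|_O=\varphi_{k,\ell}^2\big|_O,\qquad k\ge 1,\; 1\le\ell\le m_k.
\]

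Finally, I would apply \cite[Corollary 2]{HLOS}, which is designed exactly for this situation: it takes as input two closed Riemannian manifolds with an isometric common open subset, identical spectra, and two orthonormal bases of eigenfunctions agreeing on the common open set, and produces a diffeomorphism $F:M_1\to M_2$ with $F^\ast g_2=g_1$. This ends the proof. The only nontrivial step is Proposition~\ref{proposition3} itself (already established), so no real obstacle is expected here; the one subtlety to check carefully is that $P_k$ can indeed be absorbed on the $M_2$ side without disturbing the diagonalization of $\Delta_{g_2}$, which is clear because the rotation acts within a single eigenspace.
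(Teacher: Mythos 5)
Your proposal is correct and follows essentially the same route as the paper: invoke Proposition \ref{proposition3} to get equality of eigenvalues and multiplicities together with the orthogonal matrices $P_k$, absorb each $P_k$ by rotating the orthonormal basis within the eigenspace $E_k^2$ so that the two bases of eigenfunctions agree on $O$, and then apply \cite[Corollary 2]{HLOS}. The paper performs the same absorption step, only more tersely, so there is nothing to add.
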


\subsection{Manifolds with boundary}

Let $(M,g)$ be a compact connected Riemannian manifold of dimension $n\ge 2$ with boundary $\partial M$. Let $(\lambda_k^g,\phi_k^g)$ be the sequence of eigenvalues and eigenfunctions of the operator $\Delta_g$ with domain $D(\Delta_g)=H_0^1(M)\cap H^2(M)$. Assume that $(\phi_k^g)$ form an orthonormal basis of $L^2(M,d\mu_g)$.

Let $\tilde{M}=M\times \{-1,1\}/\partial M$ be the double manifold made of two copies of $M$ where we identified the points of the boundary $(x,-1)$ and $(x,1)$, $x\in \partial M$. Such double manifold exists (e.g. \cite[Example 9.32, page 226]{Le}). However $g$ does not induce in general a metric on $\tilde{M}$. 

Denote by $R$ and $\nabla$ respectively the curvature and Levi-Cevita connection of $(M,g)$. Let $\zeta$ be a vector field defined in a neighborhood of $\partial M$ in $M$ whose integral curves are geodesics in $(M,g)$ emanating from $\partial M$ orthogonally.

Let $\tilde{g}=g$ in each copy of $M$. It is shown in \cite[Proposition]{Mo} that $(\tilde{M},\tilde{g})$ is a Riemannian manifold if and only if the following two conditions hold:
\\
(i) $\partial M$ is totally geodesic.
\\
(ii) $(\nabla_\zeta^{2k+1} R)(\zeta,\cdot )=0$ in $\partial M$, for each $k\ge 0$.
\\
When conditions (i) and (ii) are satisfied we call $(\tilde{M},\tilde{g})$ the Riemannian double of $(M,g)$.

We remark that, according to \cite[Corollary 1]{Mo}, if (i) holds and the Riemannian curvature of $(M,g)$ is parallel in a neighborhood of $\partial M$ then $(\tilde{M},\tilde{g})$ is the Riemannian double of $(M,g)$.
 
Define
\[
\dot{L}_2(M,d\mu_g)=\{f\in L^2(M,d\mu_g);\; \int_Mfd\mu_g=0\}.
\]

If $(M,g)$ admits a Riemannian double $(\tilde{M},\tilde{g})$ then the sequence of eigenvalues and an orthonormal basis of eigenfunctions of the operator 
\[
\dot{\Delta}_{\tilde{g}}:\dot{L}^2(\tilde{M},d\mu_{\tilde{g}})\rightarrow \dot{L}^2(\tilde{M},d\mu_{\tilde{g}}): u\mapsto \Delta_{\tilde{g}}u
\]
 with domain $D(\dot{\Delta}_{\tilde{g}})=\dot{L}^2(\tilde{M},d\mu_{\tilde{g}})\cap H^2(\tilde{M})$  are given by $\lambda_k^{\tilde{g}}=\lambda_k^g$ and
\[
\phi_k^{\tilde{g}}=\frac{1}{2}\left\{ \begin{array}{ll} -\phi_k^g\quad &\mathrm{in}\; M\times \{-1\}, \\ \phi_k^g &\mathrm{in}\; M\times \{1\}. \end{array}\right. 
\]
We refer to \cite[Theorem]{Mo} or \cite[Theorem 7]{BM} for a proof.

Denote by $i^\pm$ the isometric embedding
\[
i^\pm : M\rightarrow \tilde{M}: x\mapsto (x,\pm 1).
\]

Let $0<\alpha <1$ and for $O\Subset M$ set $O^+=O\times\{1\}$. Then we have 
\begin{align*}
\dot{\Delta}^{-\alpha}_{\tilde{g}}(i^+f)_{|O^+}&=\sum_{k\ge 1}(\lambda_k^{\tilde{g}})^{-\alpha}(i^+f|\phi_k^{\tilde{g}})_{\tilde{g}}\phi_k^{\tilde{g}}{_{|O^+}}
\\
&=\sum_{k\ge 1}(\lambda_k^g)^{-\alpha}(f|\phi_k^g)_g\phi_k^g{_{|O}},\quad f\in C_0^\infty (O).
\end{align*}
That is the following identity holds
\[
\dot{\Delta}^{-\alpha}_{\tilde{g}}(i^+f)_{|O^+}=\Delta^{-\alpha}_gf{_{|O}},\quad f\in C_0^\infty (O).
\]

We readily obtain from Theorem \ref{theorem0.0} that $(O,f\in N_\ell \mapsto \Delta^{-\alpha}_gf{_{|O}})$ determines uniquely $g$ up to isometry, where $\ell \ge 0$ is arbitrarily fixed.

\appendix

\section{}\label{appendixA}

\subsection{Analytic semigroups} \label{sub1}

For reader convenience, we provide in this short subsection the definition of analytic semigroups. We refer the reader to the textbook \cite{RR} for an introductory study of semigroups (see also the classical monographs on the subject \cite{Ka,Pa}).

Let $E$ be a Banach space. A family $(T(t))_{t\ge 0}$ of bounded operators is called a strongly continuous semigroup if it satisfies the following properties: (i) $T(t+s)=T(t)T(s)$, $t,s\ge 0$ ; (ii) $T(0)=I$ ; (iii) for any $u\in E$, $t\in [0,\infty)\mapsto T(t)u\in E$ is continuous.

The infinitesimal generator of a strongly continuous semigroup $(T(t))_{t\ge 0}$ is defined by
\[
Au:=\lim_{t\downarrow 0}\frac{T(t)u-u}{t},
\]
and the domain of $A$, usually denoted $D(A)$, is the set of all vectors $u\in E$ for which this limit exists.

Traditionally, the semigoup whose infinitesimal generator is $A$ is denoted by $e^{tA}$.

A strongly continuous semigroup $e^{tA}$ is called analytic (or holomorphic) semigroup if: (i') there exists $\theta \in (0,\pi/2)$ such that conditions (i), (ii) and (iii) hold with $t,s\in \Sigma_\theta=\{0\}\cup\{z\in \mathbb{C};\; |\arg z|<\theta\}$ ; (ii') $t\in \Sigma_\theta \setminus\{0\} \mapsto e^{tA}$ is analytic in the sense of the uniform operator topology.

\subsection{Davies-Gaffney estimate}\label{sub2}

Let $(M,d,\mu)$ be a measure space. Here $\mu$ is a Borel measure with respect to the topology defined by the metric $d$. Let $\mathbb{C}_+=\{z\in \mathbb{C};\; \Re z>0\}$. Let $F:\mathbb{C}_+\mapsto \mathscr{B}(L^2(M,d\mu))$ satisfying
\[
\|F(z)\|_{\mathscr{B}(L^2(M,d\mu))} \le 1,\quad z\in \mathbb{C}_+,
\]
and $z\in \mathbb{C}_+\mapsto (F(z)f|g)$ is analytic for each $f,g\in L^2(M,d\mu)$, $(\cdot|\cdot)$ is the usual scalar product of $L^2(M,d\mu)$. Let $\|\cdot \|_2$ denotes the norm of $L^2(M,d\mu)$. We say that $F$ (or the family $(F(z))_{z\in \mathbb{C}_+}$) admits the Davies-Gaffney estimate if 
\[
|(F(t)f|g)|\le e^{-d^2/4t}\|f\|_2\|g\|_2,
\]
for all $t>0$, $f,g\in L^2(M,d\mu)$ satisfying $\mathrm{supp}(f)\subset O_1$, $\mathrm{supp}(g)\subset O_2$, where $O_1\subset M$, $O_2\subset M$ are such that
\[
d=\mathrm{dist}(O_1,O_2)=\inf\{d(x,y);\; x\in O_1,\; y\in O_2\}.
\]
This definition is borrowed from \cite{CS} where the authors proved that the definition of Davies-Gaffney estimate above is equivalent to the following one:
\[
|(F(t)\chi_{O_1}|\chi_{O_2})|\le e^{-d^2/4t}\mu(O_1)^{1/2}\mu(O_1)^{1/2},
\]
for all $t>0$ and Borel sets $O_1$, $O_2$ satisfying $\mu (O_1)<\infty$ and $\mu (O_2)<\infty$.

In light of this observation we have from \cite[Theorem 2, page 103]{Da} (see also \cite[Theorem 3.2, page 154]{Gr}) the following theorem

\begin{theorem}\label{theorem-appendix}
Let $(M,g)$ be a complete Riemannian manifold. Then $e^{-t\Delta_g}$ admits the Davies-Gaffeny estimate.
\end{theorem}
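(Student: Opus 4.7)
The plan is to invoke the classical Davies perturbation method. For any bounded Lipschitz function $\psi$ on $M$ with $|\nabla \psi|_g \le 1$ almost everywhere, I would derive a weighted $L^2$ energy estimate for the heat semigroup, and then specialize $\psi$ to a truncation of the distance from $O_1$ in order to extract the Gaussian decay.

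First, for $f\in L^2(M)$ set $u(t) = e^{-t\Delta_g} f$ and, for a real parameter $\alpha$, consider
\[
I(t) := \int_M |u(t)|^2 e^{2\alpha \psi}\, d\mu_g.
\]
Differentiating in $t$ using $u_t = -\Delta_g u$ and the quadratic form $\mathfrak{a}$ gives
\[
I'(t) = -2\int_M |\nabla u|^2 e^{2\alpha \psi}\, d\mu_g - 4\alpha \int_M e^{2\alpha \psi}\, u\, \nabla u \cdot \nabla \psi\, d\mu_g.
\]
Cauchy--Schwarz on the cross term absorbs the gradient term, and $|\nabla \psi|_g\le 1$ yields $I'(t) \le 2\alpha^2 I(t)$, whence $I(t) \le e^{2\alpha^2 t} I(0)$.

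Second, given $f, g \in L^2(M)$ with $\mathrm{supp}(f)\subset O_1$, $\mathrm{supp}(g) \subset O_2$ at distance $d$, I take $\psi(x) := \min(d_g(x, O_1), d)$, which is $1$-Lipschitz and bounded, vanishes on $O_1$, and equals $d$ on $O_2$. Then
\[
|(e^{-t\Delta_g} f \mid g)| \le \|e^{\alpha\psi} u(t)\|_2 \, \|e^{-\alpha\psi} g\|_2 \le e^{\alpha^2 t}\|e^{\alpha\psi} f\|_2 \cdot e^{-\alpha d}\|g\|_2 \le e^{\alpha^2 t - \alpha d}\|f\|_2 \|g\|_2,
\]
and optimizing in $\alpha > 0$ via $\alpha = d/(2t)$ produces the claimed factor $e^{-d^2/(4t)}$.

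The main obstacle is the low regularity of $\psi$: the distance function is only Lipschitz, so the integration by parts underlying the identity for $I'(t)$ requires justification. This is handled by the usual two-step procedure: (i) approximate $\psi$ by smooth bounded functions $\psi_\varepsilon$ with $|\nabla \psi_\varepsilon|_g \le 1 + o(1)$, via mollification along an exhaustion of $M$, and pass to the limit; (ii) invoke completeness of $(M,g)$ to ensure that $C_c^\infty(M)$ is a form core for $\mathfrak{a}$ (this is Gaffney's essential self-adjointness theorem for $\Delta_g$ on $C_c^\infty(M)$), so the computation extends from smooth data to arbitrary $f\in L^2(M)$. This is the only place where the completeness hypothesis enters essentially, and it is precisely why the Davies--Gaffney estimate holds without any curvature or volume assumption beyond completeness, in sharp contrast to pointwise Gaussian heat kernel upper bounds which fail in this generality.
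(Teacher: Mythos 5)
Your argument is correct: it is the classical Davies perturbation method (weighted $L^2$ energy estimate for $e^{\alpha\psi}$ with a $1$-Lipschitz weight, then optimization $\alpha=d/(2t)$), and this is precisely the proof behind the references the paper cites. The paper itself offers no proof of Theorem \ref{theorem-appendix} --- it only invokes \cite[Theorem 2, page 103]{Da} and \cite[Theorem 3.2, page 154]{Gr} together with the equivalence of formulations from \cite{CS} --- so your write-up supplies essentially the same standard argument in self-contained form, with the technical points (Lipschitz regularity of the distance weight, the role of completeness) correctly identified.
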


\end{document}